\theoremstyle{plain}
\newtheorem{theorem}{Theorem}[section]
\newtheorem{prop}[theorem]{Proposition}
\newcommand{\td}{\text{d}}
\theoremstyle{definition}
\numberwithin{equation}{section}
\begin{document}
\title[Stability of Homogeneous minimal hypersurfaces...]{Stability of Homogeneous minimal hypersurfaces in the Page space and $Y^{p,q}$ Sasaki-Einstein manifolds}
\author{Natalia Gherghel}
\address{Department of Physics and Astronomy\\
		McMaster University\\
		Hamilton, ON Canada}
	\email{gherghen@mcmaster.ca}
	\author{Hari K. Kunduri}
\address{Department of Mathematics and Statistics and Department of Physics and Astronomy\\
		McMaster University\\
		Hamilton, ON Canada}
	\email{kundurih@mcmaster.ca}


\thanks{N. Gherghel is partially supported by a Physics and Astronomy Graduate Scholarship from McMaster University. H. K. Kunduri is supported by an  NSERC Discovery Grant RGPIN-2025-06027.}

\begin{abstract}
We investigate the stability of homogeneous minimal submanifolds in two families of closed Einstein manifolds, the Page space $\mathbb{CP}^2 \# \overline{\mathbb{CP}^2}$ and the Sasaki-Einstein spaces $Y^{p,q}$, which are equipped with cohomogeneity-one Einstein metrics admitting the isometric action of $SU(2) \times U(1)$ and $U(1) \times U(1) \times SU(2)$ respectively. We determine all the homogeneous, minimal hypersurfaces and explicitly compute the spectrum of their associated stability operators and determine their index. 
\end{abstract}
\maketitle
\section{Introduction}
An Einstein manifold is a Riemannian manifold $(M, g)$ whose Ricci tensor satisfies $\text{Ric}(g) = c g$ for some constant $c \in \mathbb{R}$ (for a classic survey, see \cite{Besse}). For homogeneous manifolds, the Einstein condition reduces to an algebraic system and there is a well-developed existence theory. For example, each compact simply
connected homogeneous space of dimension less than 12 admits a homogeneous Einstein metric \cite{Bohm2, Bohm}. A successful approach to constructing inhomogeneous Einstein metrics is to consider \emph{cohomogeneity-one} Einstein metrics. A Riemannian manifold $(M,g)$ is said to be cohomogeneity-one if a Lie group $G$ acts isometrically on $M$ with principal orbits $G /K$ of codimension one (that is, the generic orbits of the isometry group are hypersurfaces). In this case, the Einstein condition reduces to a system of ordinary differential equations in the variable on the space of orbits. 

Einstein manifolds of positive curvature $c>0$ are necessarily compact with finite fundamental group by Myers' theorem \cite{Myers}. Explicit families of examples of such metrics on closed manifolds $M$ can be readily constructed. The first of these was Page's inhomogeneous (conformally K\"ahler) Einstein metric on $\mathbb{CP}^2 \# \overline{\mathbb{CP}}^2$, which was originally found by an analytic continuation of a Lorentzian Einstein metric describing a black hole spacetime \cite{Page}. The metric has a four-dimensional isometry group $SU(2) \times U(1)$ acting on the $S^3$ surfaces of homogeneity $\Sigma_t$ where $t\in (-1,1)$ parameterizes the level sets. The homogeneous geometry $(\Sigma_t,h_t)$ induced on these surfaces is not the standard round metric on $S^3$, but it is nonetheless possible to explicitly determine the spectrum of the Laplacian \cite{Wu:1976ge, Hennigar}. This can in turn be used to reduce the problem of determining the problem of computing the spectrum of the Laplacian on the Page metric to a one-dimensional Sturm-Liouville problem \cite{Hennigar}. 

A second class of explicit cohomogeneity-one positive Einstein metrics on closed manifolds are the countably infinite family of toric Sasaki-Einstein metrics $Y^{p,q}$ where $(p,q)$ are positive coprime  \cite{Gauntlett:2004yd} (recall that an odd-dimensional Riemannian manifold $(M,g)$ is said to admit a Sasakian structure if and only if its metric cone $C(S):= (\mathbb{R}_{0^+} \times M, \bar{g} = \td r^2 + r^2 g)$ is K\"ahler). The $Y^{p,q}$ metrics are defined on the product $S^2 \times S^3$ where now $G = SU(2) \times U(1) \times U(1)$ acting with four-dimensional orbits on the surfaces of homogeneity $S^1 \times L(2,1)$ (see Section X for details). These geometries represent the first examples of quasi-regular and irregular Sasaki-Einstein structures on $S^2 \times S^3$ (these properties refer to whether the orbits of a certain canonically defined Killing vector field are closed and whether the action it generates is free - see the comprehensive survey \cite{Sparks} for precise definitions). In addition to being of intrinsic interest in differential geometry, the $Y^{p,q}$ geometries arise naturally within theoretical physics. In particular, the spacetime AdS$_5 \times Y^{p,q}$ furnishes a supersymmetric solution of Type IIB supergravity. String theory on this spacetime background is expected to be equivalent to a certain family of four-dimensional superconformal field theories \cite{Martelli:2004wu}. Thus, the manifolds $Y^{p,q}$ provide an interesting class of examples to study the gauge theory-gravity (AdS/CFT) correspondence beyond the classic maximally symmetric setting of AdS$_5 \times S^5$ \cite{AdSCFT} (indeed, $S^5$ with its round metric is the model Sasaki-Einstein space). As in the Page space case, the spectrum of the Laplacian on the induced metric on the principal orbits of the isometry action can be determined explicitly, which allows one to reduce the problem of determining the spectrum of the Laplacian on $Y^{p,q}$ to analyzing an ordinary differential equation of Heun type~\cite{Kihara:2005nt}.

The purpose of the present note is to investigate the stability of a class of embedded minimal hypersurfaces within the two families of the Einstein manifolds discussed above. The cohomogeneity-one property plays an essential role in the analysis.  Recall that under a one parameter family of deformations $\Phi_t$ generated by the vector field $X = f n$ where $n$ is the outward unit normal and $f \in C^\infty_0 (\Sigma)$, we can consider the family of deformations of a fixed two-sided (that is, there is a continuous choice of unit normal) embedded closed submanifold $(\Sigma,h)$ by $\Sigma_t:=\Phi_t(\Sigma)$.  As is well known, the second variation of the area functional over a minimal hypersurface ($H_\Sigma =0$) is given by
\begin{align}\label{secondvar}
\frac{\td^2}{\td t^2}{\Bigg|}_{t=0} \text{Area}[\Sigma_t] & =\int_\Sigma \left(|\nabla f|^2 - (\text{Ric}(n,n) + \text{Tr} (K^2)f^2 \right)\; \td \text{vol}
\end{align} A compact minimal hypersurface is called stable if the above second variation is nonnegative for all $f \in C^\infty_0(\Sigma)$. The stability (Jacobi) operator $L$ associated with a minimal hypersurface is defined by
\begin{equation}
    L:= -\Delta_\Sigma - (\text{Ric}(n,n) + \text{Tr} (K^2))  = -\Delta_\Sigma + \frac{1}{2} (R(h) - R(g) - \text{Tr} (K^2))
\end{equation} where in the second equality we have used the trace of the Gauss equation with $H_\Sigma =0$. Here $R(h), R(g)$ denote the scalar curvatures of the induced metric $h$ and the ambient space metric $g$ respectively, and $\text{Tr} (K^2) = K_{ij} K^{ij}$. It is immediate by setting $f = 1$ in \eqref{secondvar} that if $\text{Ric}(g) >0$ then there are no closed stable two-sided minimal hypersurfaces. This applies to the Page manifold and the $Y^{p,q}$ spaces, which are positive Einstein.  

The stability operator $L$ can be shown to always have an infinite number of positive eigenvalues, but may or may not have a finite number of negative eigenvalues. The \emph{Morse index} of a minimal hypersurface, denoted $\text{index}(\Sigma)$, is defined to be the number of negative eigenvalues (assuming it is finite) of $L$. Equivalently \cite{Chodosh},
\begin{equation}
    \text{index}(\Sigma):=\sup \{ \text{dim} V | V \subset C^\infty_0(\Sigma), L < 0 \; \text{on } V \setminus \{0\} \}.
\end{equation} The index measures the sum of the dimensions of the eigenspaces corresponding
to negative eigenvalues of $L$, that is, the maximal dimension of the space of variations that destabilize $\Sigma$ to second order. 

In a space with positive Ricci curvature, the closed minimal hypersurfaces with index one are therefore the `most' stable. The index has proved quite important in the classification of minimal surfaces; for example, for immersed minimal surfaces in $\mathbb{R}^3$ with finite total curvature, only the plane has index 0 (it is stable), while the catenoid and Enneper's surface are the possibilities of index 1.  For a survey of results relating the geometry and topology of minimal surfaces, see \cite{survey}. 

A classic result of Simons establishes that for minimal spheres $S^p$ with the standard totally geodesic embedding in $S^n$, the index is $n-p$ and its nullity is $(p+1)(n-p)$ \cite[Prop. 5.1.1]{Simons}. More generally, for any compact, closed $p-$dimensional minimal immersed submanifold $\Sigma$ of $S^n$, $\text{index}(\Sigma) \geq n-p$ and its nullity $\text{null}(\Sigma) \geq (p+1)(n-p)$ with equality holding only when $M = S^p$ \cite[Theorem 5.1.1]{Simons}. For recent work discussing the problem of computing the index of minimal submanifolds in complex Einstein manifolds, we refer the reader to the preprint \cite{Kalafat:2024ctd}.

In the following, we consider the set of homogeneous hypersurfaces of the cohomogeneity-one Page and $Y^{p,q}$ metrics and identify those that are minimal. Our main results are to determine the index of these minimal hypersurfaces. 

\begin{theorem}\label{spectrum:Page}
The Page space $(\mathbb{CP}^2 \# \overline{\mathbb{CP}^2},g_P)$ admits a single minimal hypersurface within the set of surfaces of homogeneity that is a Berger sphere. It is totally geodesic, and the spectrum of the stability operator can be computed explicitly, and in particular, it has index 1 and nullity 0. 
\end{theorem}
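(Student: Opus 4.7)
The strategy is to exploit the cohomogeneity-one structure of $(\mathbb{CP}^2 \# \overline{\mathbb{CP}^2}, g_P)$. Near a principal orbit one writes $g_P = \td t^2 + h_t$, with $h_t = A(t)^2 \sigma_1^2 + B(t)^2(\sigma_2^2 + \sigma_3^2)$ a family of Berger sphere metrics on $S^3$ in terms of the standard left-invariant coframe $\{\sigma_i\}$ on $SU(2)$. The homogeneous hypersurfaces are precisely the principal orbits $\Sigma_t$, so their mean curvature is $H(t) = A'(t)/A(t) + 2B'(t)/B(t)$ and their second fundamental form is $K_{ij} = \tfrac12 h'_{ij}(t)$. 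The Page metric admits an isometric $\mathbb{Z}_2$ symmetry $t \mapsto -t$ that exchanges the two summands in the connect sum; this symmetry forces $A'(0) = B'(0) = 0$, so $\Sigma_0$ is simultaneously minimal \emph{and} totally geodesic. For uniqueness, I would analyse the Einstein ODEs for the Page ansatz \cite{Page} directly: $H$ is an odd function of $t$ on the closed interval of definition, and monotonicity of $H$ through the origin (verifiable from the linearised ODE at $t=0$) then precludes further zeros.

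Because $\Sigma_0$ is totally geodesic and $g_P$ is Einstein with $\text{Ric}(g_P) = c\, g_P$, $c > 0$, the Jacobi operator reduces to $L = -\Delta_{h_0} - c$. The spectrum of $-\Delta_{h_0}$ on the Berger sphere $(\Sigma_0, h_0)$ has been computed explicitly \cite{Wu:1976ge, Hennigar} using harmonic analysis on $SU(2)$: the eigenfunctions are the Wigner $D$-functions $D^l_{k,m}$ with $2l \in \mathbb{Z}_{\geq 0}$ and $|k|,|m| \leq l$, and the eigenvalues take the schematic form
\begin{equation*}
\lambda_{l,m} = \frac{4l(l+1) - m^2}{B(0)^2} + \frac{m^2}{A(0)^2},
\end{equation*}
each occurring with multiplicity $2l+1$ (the index $k$ being degenerate). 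The eigenvalues of $L$ are then $\lambda_{l,m} - c$, and both the index and nullity of $\Sigma_0$ can be read off once the three numbers $A(0)$, $B(0)$, $c$ are computed from the explicit Page metric.

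Counting the negative eigenvalues then reduces to a finite verification. The constant mode $l = 0$ contributes the single eigenvalue $-c < 0$, already producing $\text{index}(\Sigma_0) \geq 1$. Since $\lambda_{l,m}$ is strictly increasing in $l$ for fixed $m$ (and since both coefficients in the formula above are positive), showing $\lambda_{l,m} > c$ for all $(l,m) \neq (0,0)$ reduces to checking the lowest nontrivial modes $(l,m) \in \{(\tfrac12,\pm\tfrac12), (1,0), (1,\pm1)\}$; establishing strict inequality in each case simultaneously yields $\text{index}(\Sigma_0) = 1$ and $\text{null}(\Sigma_0) = 0$. I expect the main technical obstacle to lie precisely in this last step: the constants $A(0)$, $B(0)$, and $c$ are not independent parameters but are pinned down by the transcendental condition that the cohomogeneity-one ODE system close up smoothly at the two singular orbits bounding the interval. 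Hence the inequalities have to be verified using the actual values these quantities take in the Page solution, rather than for a schematic Berger sphere.
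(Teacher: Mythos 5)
Your proposal follows essentially the same route as the paper: identify the minimal principal orbit by an explicit mean--curvature computation, use the reflection symmetry $x \mapsto -x$ (evenness of the metric coefficients in $x$) to get total geodesy of $\Sigma_0$, reduce the Jacobi operator to $L = -\Delta_{h_0} - 3$, and compute the Berger-sphere spectrum explicitly, checking the finitely many lowest nontrivial modes against the actual Page values. Your parametrization of the spectrum by Wigner $D$-functions $D^l_{k,m}$ is equivalent to the paper's use of Wu--Yang monopole harmonics on the $S^2$ base: with $\ell = 2l$ and $n = m$ one has $\ell(\ell+2) - n^2 = 4l(l+1) - m^2$, so the eigenvalue formulas agree, and the last step (verifying $\lambda_{l,m} > 3$ for the lowest modes using the Page values of $A(0)$, $B(0)$ and the root $\nu$) is exactly the finite numerical check the paper performs.

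The one genuine gap is in your uniqueness argument for the minimal orbit. Oddness of $H(t)$ together with monotonicity \emph{through the origin}, read off from the linearised ODE at $t=0$, only excludes additional zeros in a neighbourhood of $t=0$; it says nothing about zeros elsewhere in the orbit interval (an odd function monotone at the origin can vanish many times). Since the point of contention with \cite{KalafatSari} is precisely whether there are further homogeneous minimal hypersurfaces, this step needs a global argument. The paper supplies one: in the explicit formula $H_\Sigma \propto x\left(-3-\nu^4 + 2\nu^2(1+\nu^2)x^2\right)/(1-\nu^2 x^2)$ the quadratic factor has both of its real roots outside $(-1,1)$ (it is in fact strictly negative on the whole interval for the Page value of $\nu$), so $H_\Sigma = 0$ forces $x=0$. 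You would need to replace ``monotonicity at the origin'' with this kind of global sign analysis of $H$ on the full interval; once that is done, the rest of your argument goes through as in the paper.
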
 

\begin{theorem}\label{Thm2}
The cohomogeneity-one Sasaki-Einstein spaces $Y^{p,q}$ on $S^2 \times S^3$ admit a single minimal hypersurface within the set of surfaces of homogeneity. It is not totally geodesic and has topology $S^1 \times L(2,1)$. The spectrum of its stability operator can be computed explicitly and in particular, it has index 3 and nullity 0. 
\end{theorem}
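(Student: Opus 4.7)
The plan is to follow the structure of the Page-space proof from Theorem~\ref{spectrum:Page}, adapted to the $Y^{p,q}$ family. First I would record the standard cohomogeneity-one form of the Sasaki--Einstein metric with a coordinate $y$ ranging in an interval $[y_1, y_2]$, with $y_1, y_2$ two real roots of a cubic depending on $(p,q)$, at which the four-dimensional principal orbit collapses to a lower-dimensional singular orbit via the shrinking of distinct combinations of the $U(1)$ factors. In the interior $y \in (y_1, y_2)$ the principal orbit carries an explicit left-invariant metric $h(y)$ for the action of $SU(2) \times U(1) \times U(1)$, with underlying topology $S^1 \times L(2,1)$ as determined by the stabilizer subgroups at the endpoints.

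For the hypersurface $\Sigma_{y_0} = \{y = y_0\}$ with unit normal $n$ proportional to $\partial_y$, the second fundamental form $K(y_0) = \tfrac{1}{2}\mathcal{L}_n h$ and the mean curvature $H(y_0) = \text{Tr}_{h}K$ are explicit rational functions of $y_0$. I would show $H(y_0) = 0$ has a unique root $y_* \in (y_1, y_2)$ by either verifying monotonicity of $H$ or explicitly factoring the polynomial numerator. A direct check shows $K \not\equiv 0$ at $y_*$, so $\Sigma_* := \Sigma_{y_*}$ is not totally geodesic, and its topology is inherited from that of the principal orbit.

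By homogeneity of $\Sigma_*$, the potential $\text{Ric}(n,n) + |K|^2$ is a constant $\kappa$ on $\Sigma_*$: the Einstein condition with the standard normalization gives $\text{Ric}(n,n)$ equal to the Einstein constant, and $|K|^2$ is evaluable explicitly at $y_*$. Thus the stability operator reduces to
\[L = -\Delta_{h_*} - \kappa,\]
whose spectrum is that of $\Delta_{h_*}$ shifted by $-\kappa$. The Laplacian on $(S^1 \times L(2,1), h_*)$ with a left-invariant metric diagonalizes via Peter--Weyl, with eigenfunctions labeled by an $SU(2)$ spin $\ell \in \tfrac{1}{2}\Z_{\geq 0}$ together with weights $(m,k)$ under the two $U(1)$ factors, subject to the quantization consistent with the $L(2,1)$ quotient. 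The eigenvalues take the quadratic form $\lambda(\ell, m, k) = A\ell(\ell+1) + B m^2 + C m k + D k^2$ for coefficients $A, B, C, D$ determined by $y_*$ and $(p,q)$.

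The index is then the total dimension of the span of eigenfunctions with $\lambda < \kappa$, counted with the $SU(2)$ representation multiplicity $2\ell+1$, and the nullity counts those with $\lambda = \kappa$. I expect the negative modes to be confined to $\ell = 0$ with small $(m,k)$, yielding index $3$ and nullity $0$. The main obstacles will be twofold. First, establishing uniqueness of the root $y_*$ uniformly in $(p,q)$ requires controlling a polynomial with $(p,q)$-dependent coefficients. Second, verifying that exactly three eigenvalues lie strictly below $\kappa$ with none coinciding, across all admissible $(p,q)$, will require a combination of closed-form analysis of the low-lying spectrum and uniform lower bounds ruling out higher harmonics.
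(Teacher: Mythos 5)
Your plan follows the paper's proof essentially step for step: reduce $H_{\Sigma_y}=0$ to a polynomial in $y$ (the paper gets the cubic $b+6y-15y^2+8y^3=0$ and isolates its unique root $\bar{y}\in(y_1,0)$ by a sign/discriminant analysis), verify $K\neq 0$ there, read off the topology from the degenerating torus generators, observe that homogeneity makes the potential a constant so $L=-\Delta_{h}-\kappa$ with $\kappa = 6(1-4\bar y)/(1-\bar y)$, and then diagonalize $\Delta_h$ by separation of variables into charged harmonics on $S^2$ (the paper reduces to a hypergeometric equation following Gibbons--Hartnoll--Yasui, with $E=J(J+1)$, $J=k+\tfrac12(|n_\psi|+|n_\phi|)$ --- exactly the Casimir-plus-weights structure you describe). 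So the approach is the same; what remains is the case analysis over quantum numbers, which you correctly flag as the hard part and which the paper itself partly settles only numerically and uniformly in $(p,q)$ via the bound $-1/8<\bar y<0$.

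There is, however, one concrete point where your plan as written would not deliver the stated result. You expect the negative modes to be ``confined to $\ell=0$ with small $(m,k)$.'' The $SU(2)$-invariant sector contributes only the constant function as a candidate negative direction: any nonzero charge under the commuting $U(1)$ factors enters the eigenvalue through manifestly nonnegative terms ($N_\alpha^2/\ell^2$ plus a positive multiple of a square), so restricting to $\ell=0$ yields index $1$, not $3$. In the paper's count the two additional negative eigenvalues come from the modes $k=0$, $(N_\psi,N_\phi)=(0,\pm1)$, $N_\alpha=0$, which have $J=1$, i.e.\ they live in the spin-$1$ sector of the $SU(2)$ action, not the trivial one. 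Relatedly, your proposal to count negative modes ``with the $SU(2)$ representation multiplicity $2\ell+1$'' needs care: because the orbit metric is only left-invariant, you must track which quantum numbers label the irrep and which label the multiplicity space, and reconcile the resulting degeneracies with the multiplicity $2$ that the paper assigns to $\tilde\lambda_{(0,0,0,\pm1)}$ (indeed, checking $SU(2)$-equivariance of the eigenspaces here is a worthwhile consistency test of the whole low-lying spectrum, since the third member of that $J=1$ multiplet is the $(k,N_\psi,N_\phi)=(1,0,0)$ mode and must carry the same eigenvalue). Until you locate these non-invariant negative directions explicitly and verify their eigenvalues uniformly in $(p,q)$, the index computation is not complete.
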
 

The proofs of Theorem \ref{spectrum:Page} and Theorem \ref{Thm2} are given respectively in Section 2 and Section 3 below. We note that the problem of computing the index of minimal hypersurfaces for the Page metric was considered recently in \cite[Theorem 5.1]{Kalafat:2024ctd}; their result differs from ours in that it is claimed, based on their earlier work \cite{KalafatSari}, that there are additional homogeneous minimal surfaces in the Page space, whereas we find only a single such surface (see Proposition \ref{Minpage}).

\section{Minimal hypersurfaces in the Page metric}
\subsection{The Page space}
The Page space \cite{Page} is the manifold $M = \mathbb{CP}^2 \# \overline{\mathbb{CP}^2}$ equipped with an inhomogeneous Einstein metric $g_P$, given in local coordinates $(x,\psi,\theta,\phi)$ by
\begin{equation}
g = S \left[\frac{\td x^2}{A(x)} + 4 \alpha^2 A(x) \left(\td \psi + \frac{\cos\theta}{2} \td \phi \right)^2 + B(x) (\td \theta^2 + \sin^2\theta \td\phi^2) \right]
\end{equation} where 
\begin{equation}
A(x) = \frac{(3 - \nu^2 - \nu^2 (1+ \nu^2)x^2) (1-x^2) } {1 - \nu^2 x^2}, \qquad B(x) =  \frac{1 - \nu^2 x^2}{3 + 6\nu^2 - \nu^4}, \end{equation} and \begin{equation} S = \frac{3(1 + \nu^2)}{\Lambda}, \qquad  \alpha = (2 (3 + \nu^2))^{-1}.
\end{equation} Here  $x \in (-1,1)$, $\theta \in (0,\pi)$ and $\nu$ is a real parameter that is determined by the requirement that $g_P$ metric is Einstein, i.e. 
\begin{equation}
\text{Ric}(g_P) = \Lambda g_P.
\end{equation} This requires that $\nu$ be a positive root of the quartic
\begin{equation}
\nu^4 + 4\nu^3 - 6 \nu^2 + 12 \nu - 3 =0.
\end{equation} There is a unique such root $\nu \approx 0.281702$. Note $\det g = 4 S^4 \alpha^2 B(x) \sin^2\theta$. For convenience, we will normalize the volume of the Page metric by fixing $\Lambda = 3$ from now on; this is the scaling used for the unit round $n-$sphere $S^n$, i.e. $\text{Ric}(g) = -(n-1)g$.

The metric extends to a globally smooth metric on the non-trivial $S^2$ bundle over $S^2$ provided the following identifications are satisfied: 
\begin{equation}\label{identifications}
(\psi, \phi) \sim (\psi + \pi, \phi + 2\pi), \qquad (\psi, \phi) \sim (\psi + 2\pi, \phi),
\end{equation} In this local chart, the $(\theta, \phi)$ coordinates parameterize the round $S^2$ base and the $(x,\psi)$ coordinates parameterze an $S^2$ with inhomogeneous metric. The Killing field $\partial_\psi$ degenerates smoothly at  $x = \pm 1$, the poles of the $S^2$ fibres. Surfaces of constant $x$ have $S^3$ topology with induced metric 
\begin{equation}
    h = S \left[4 \alpha^2 A(x) \left(\td \psi + \frac{\cos\theta}{2} \td \phi \right)^2 + B(x) (\td \theta^2 + \sin^2\theta \td\phi^2) \right].
\end{equation} This exhibits the form of a fibre metric of a circle bundle over $S^2$ with $\psi$ parameterizing the $S^1$ fibres.  The bundle is non-trivial with Chern number 1.

The metric is cohomogeneity-one and admits an $SU(2)\times U(1)$ isometry group generated by the Killing vector fields
\begin{equation}\label{SU(2)gen}
\begin{aligned}
R_1 &= -\cot\theta \cos\phi \partial_\phi - \sin \phi \partial_\theta + \frac{\cos\phi}{2\sin\theta} \partial_{\psi}, \\
R_2 & = -\cot\theta \sin \phi \partial_\phi + \cos\phi \partial_\theta + \frac{\sin\phi}{2\sin\theta} \partial_{\psi}, \\
R_3& = \partial_\phi, \qquad L_3 = \tfrac{1}{2}\partial_{\psi}.
\end{aligned}
\end{equation} and so $L_{R_i} g_P =0$, $i = 1,2,3$ and $L_{L_3} g_P =0$. The Killing fields satisfy the Lie algebra of $SU(2) \times U(1)$, naemly, $[R_i, R_j] = -\epsilon_{ijk} R^k$, $[L_3, R_i] = 0$ with $\epsilon_{ijk}$ the totally antisymmetric tensor with $\epsilon_{123} = 1$. 

\subsection{Homogeneous minimal hypersurfaces}
The isometry group acts with three-dimensional orbits acting on the homogeneous $S^3$ level sets of the coordinate $x \in (-1,1)$. We are interested in classifying which of these hypersurfaces are minimal, i.e., critical points of the area functional. It is claimed in \cite{KalafatSari} that each such level set is minimal; we find, however, that only the `equatorial' level set at $x =0$ is minimal. The second fundamental form $K$ of a two-sided hypsurface $\Sigma \subset (M,g)$ with unit normal $n$ is given by the symmetric tensor field
\begin{equation}
    K(X,Y):=g(\nabla_X n, Y)
\end{equation} where $X,Y$ are vector fields tangent to $\Sigma$, $g$ is the ambient space metric, and $\nabla$ is the metric connection associated to $g$. While it is straightforward to compute the components of $K$ in the local coordinate basis, since the hypersurfaces being considered are just coordinate level sets, it is convenient to use the following `$n+1$ decomposition' of the ambient space metric
\begin{equation}\label{ADM}
    g = \beta^2 \td x^2 + h_{ij} (\td y^i + N^i \td x)(\td y^j + N^j \td x)
\end{equation} where $\alpha$ is a function and $N$ is a vector field on the family of level sets $(\Sigma_x,h)$ (referred to as the `lapse' and `shift' in the relativity setting) and $y^i$ are local coordinates on $\Sigma_x$. Here $h_{ij}, N^i$ are understood to be functions of $x$, the parameter which labels the level sets.  It is a straightforward computation to determine the second fundamental form
(see e.g., \cite[Section 2.3]{Alcubierre}): 
\begin{equation}\label{2FF}
    K_{ij} = \frac{1}{2\beta} \left(\partial_x h_{ij} - D_i N_j - D_j N_i\right)
\end{equation} where $D$ is the metric connection associated to $h$. We have chosen the sign of $K$ so that the standard sphere in Euclidean space has a positive-definite second fundamental form with outward pointing normal $n$.
\begin{prop}\label{Minpage} Let $\Sigma_x$ be a homogeneous $S^3$ hypersurface defined as a level set of $x \in (-1,1)$. Then $\Sigma:=\Sigma_0$ is the only minimal hypersurface of this family. Moreover, it is totally geodesic. 
\end{prop}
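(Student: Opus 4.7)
The plan is to use the ADM-style decomposition \eqref{ADM} that is already set up in the paper. Reading off from the explicit form of $g_P$, we have $\beta^2 = S/A(x)$, $N^i \equiv 0$, and a purely $x$-dependent induced metric $h$ on each level set $\Sigma_x$. With $N^i=0$ the formula \eqref{2FF} collapses to $K_{ij}=\tfrac{1}{2\beta}\partial_x h_{ij}$, so the mean curvature is
\begin{equation*}
H_{\Sigma_x} \;=\; h^{ij} K_{ij} \;=\; \frac{1}{2\beta}\,\partial_x \log \det h.
\end{equation*}
A direct $3\times 3$ determinant computation (the off-diagonal $d\psi\,d\phi$ cross term contributes a perfect cancellation with the $\cos^2\theta$ piece of $h_{\phi\phi}$) gives $\det h = 4 S^3 \alpha^2\, A(x) B(x)^2 \sin^2\theta$, so minimality of $\Sigma_x$ is equivalent to
\begin{equation*}
\partial_x\bigl(A(x) B(x)^2\bigr) = 0.
\end{equation*}

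Next I would plug in the explicit rational expressions for $A$ and $B$. The key structural observation is that $B(x)$ cancels the denominator of $A(x)$, leaving
\begin{equation*}
A(x) B(x)^2 \;=\; \frac{(3-\nu^2 - \nu^2(1+\nu^2)x^2)(1-x^2)(1-\nu^2 x^2)}{(3+6\nu^2 - \nu^4)^2},
\end{equation*}
a polynomial in $x^2$. Its derivative therefore has $x$ as a factor, and after logarithmic differentiation one can write the minimality condition as $-2x \cdot Q(x) = 0$ with
\begin{equation*}
Q(x) \;=\; \frac{\nu^2(1+\nu^2)}{3-\nu^2 - \nu^2(1+\nu^2)x^2} \;+\; \frac{1}{1-x^2} \;+\; \frac{\nu^2}{1-\nu^2 x^2}.
\end{equation*}
For $x\in(-1,1)$ each denominator is strictly positive (using $\nu\approx 0.2817$ one has $3-\nu^2-\nu^2(1+\nu^2) = 3 - 2\nu^2 - \nu^4 > 0$), so $Q(x)>0$ and the only root in $(-1,1)$ is $x=0$. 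This produces the unique homogeneous minimal hypersurface $\Sigma_0$ and answers the question raised against the claim of \cite{KalafatSari}.

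Finally, to see that $\Sigma_0$ is totally geodesic I would invoke the evident $\mathbb{Z}_2$ isometry $\sigma:(x,\psi,\theta,\phi)\mapsto(-x,\psi,\theta,\phi)$ of $g_P$: the metric depends on $x$ only through the even functions $A(x)$, $B(x)$, and $\td x^2$, so $\sigma$ is an isometric involution whose fixed point set is precisely $\Sigma_0$. A connected component of the fixed-point set of an isometry is always totally geodesic, which gives $K \equiv 0$ on $\Sigma_0$. As a cross-check one can verify this directly from $K_{ij}=\tfrac{1}{2\beta}\partial_x h_{ij}$: every component of $h_{ij}$ is an even function of $x$, so each $\partial_x h_{ij}$ vanishes at $x=0$.

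The only nontrivial step is confirming that $Q(x)$ has no zeros on $(-1,1)$, i.e.\ that the leading coefficient $3-\nu^2 - \nu^2(1+\nu^2)x^2$ stays positive on the interval; this is a one-line numerical check using the approximate value of the root of the defining quartic $\nu^4+4\nu^3-6\nu^2+12\nu-3=0$, and does not require any delicate estimate.
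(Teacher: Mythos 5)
Your proposal is correct and follows essentially the same route as the paper: the ADM decomposition \eqref{ADM} with $\beta^2=S/A(x)$, $N^i=0$, minimality reduced to $\partial_x\log\det h=0$, and total geodesicity from the parity of $h_{ij}$ in $x$ (your fixed-point-set-of-an-isometric-involution argument is a cleaner packaging of the paper's observation that each $K_{ij}$ is odd in $x$). One point worth flagging: your determinant $\det h = 4S^3\alpha^2 A(x)B(x)^2\sin^2\theta$ is the correct one, whereas the paper records $\det h \propto A(x)B(x)\sin^2\theta$ and derives \eqref{HPage} from it; fortunately both expressions for $H_\Sigma$ vanish only at $x=0$ on $(-1,1)$ (your manifestly positive $Q(x)$ makes this transparent), so the discrepancy does not affect the conclusion.
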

\begin{proof} For the metric $g_P$, it is straightforward to read off 
\begin{equation}
    \beta = \left(\frac{S}{A(x)}\right)^{1/2}, \qquad N^i =0
\end{equation} The mean curvature of a level set hypersurface $\Sigma_x$ is 
\begin{equation}
    H_\Sigma = \text{Tr}_h K =  h^{ij} K_{ij} = \frac{1}{2\beta} h^{ij}\partial_x h_{ij} = \frac{1}{2\beta} \partial_x (\log \det h)
\end{equation} where
\begin{equation}
    \det h = S^3 \cdot 4 \alpha^2 A(x) B(x) \sin^2\theta.
\end{equation} This gives
\begin{equation}\label{HPage}
    H_\Sigma = \left(\frac{1}{S A(x)}\right)^{1/2} \frac{x (-3 - \nu^4  + 2\nu^2 x^2(1+\nu^2))}{1 - \nu^2 x^2}.
\end{equation} Recall $A(x) > 0$ in the domian $x \in (-1,1)$. It can be checked explicitly that the quadratic in round brackets in the numerator has two real roots which lie outside the domain. Thus $H_\Sigma = 0$ if and only if $x =0$. 

Next, observe that the components of the Page metric $g_{ij}$ and the induced metric $h_{ij}$ are functions of $x^2$ alone. It follows from \eqref{2FF} that each of the components $K_{ij}$ are odd functions of $x$ and hence they all vanish at $x=0$. Thus, the minimal hypersurface $\Sigma_0$ is totally geodesic. 
\end{proof}
We now turn to determining the index of the stability operator on $\Sigma_0$. 

\subsection{Proof of Theorem \ref{spectrum:Page}}
For the minimal hypersurface $\Sigma_0$ in the Page space, we may, in fact, explicitly determine the spectrum of $L$ by employing the homogeneity of the metric and demonstrating that it has index 1. 

Since $\text{Ric}(n,n) = 3 g(n,n) = 3$ we simply have
\begin{equation}\label{Lpage}
    L = -\Delta_\Sigma -3
\end{equation} The Laplacian with respect to $(\Sigma,h)$ is explicitly obtained from the formula
\begin{equation}
    \Delta_h f = \frac{1}{\sqrt{\det h}} \partial_i (\sqrt{\det h} h^{ij} \partial_j) f
\end{equation} To determine the spectrum of $L$, introduce the differential operator \cite{Wu:1976ge}
\begin{equation}
    D = \nabla_{S^2} - i n A
\end{equation} where $n \in \mathbb{Z}$ and $\nabla_{S^2}$ is the covariant derivative with respect to the round metric on the sphere with volume $\pi$, 
\begin{equation}
    \hat{g} = \frac{1}{4} (\td\theta^2 + \sin^2\theta \td\phi^2),
\end{equation} and we have defined the locally defined one-form
\begin{equation}
    A = \frac{\cos\theta}{2} \td\phi.
\end{equation}
A computation shows that
\begin{equation}
    D^2 = \hat{g}^{ij}D_i D_j = \frac{4}{\sin\theta} \partial_\theta (\sin\theta \partial_\theta) + \frac{4}{\sin^2\theta} \partial_\phi^2 - n^2 \cot^2\theta - 4 i n \frac{\cos\theta}{\sin^2\theta} \partial_\phi.
\end{equation} This can be thought of as a kind of generalization of the Laplacian on the round sphere. The spectrum of $D$ can be determined explicitly. In particular, the eigenfunctions $Y_{n,k}(\theta,\phi)$ satisfy
\begin{equation}
    D^2 Y_{n,k}(\theta, \phi) = \mu_{n,k} Y_{n,k}(\theta, \phi)
\end{equation} with $\mu_{n,k} \geq 0$ are a discrete family of eigenvalues. We will drop the eigenvalue labels for convenience. The values taken by $\mu$ are
\begin{equation}\label{mu}
    \mu = \ell(\ell + 2) - n^2, \; \text{ where }\; \ell = 2k + |n| \text{ with }\; k = 0,1,2,3 \ldots
\end{equation} To compute $\Delta_\Sigma$, we will show that it is the same as $D^2$ up to some additional terms. The inverse components of the metric are
\begin{equation}
    h^{\theta\theta} = \frac{1}{S B(x)}, \quad h^{\psi\psi} = \frac{1}{4\alpha^2 S A(x)} + \frac{\cot^2\theta}{4 S B(x)}, \quad h^{\psi\phi} = -\frac{\cos\theta}{2 B(x) \sin^2\theta}, \quad h^{\phi\phi} = \frac{1}{S B(x) \sin^2\theta}
\end{equation} which leads to 
\begin{equation}
\begin{aligned}
    \Delta_h f & = \frac{1}{S B(x)} \left[ \frac{1}{\sin\theta} \partial_\theta ( \sin\theta \partial_\theta f) + \frac{\partial_\phi^2 f}{\sin^2\theta}  \right] + \left( \frac{1}{4\alpha^2 S A(x)} + \frac{\cot^2\theta}{4 S B(x)}\right)\partial_{\psi}^2 f \\
    & - \frac{\cos \theta}{S B(x) \sin^2\theta} \partial_\psi \partial_\phi f
\end{aligned}
\end{equation} where $x$ is understood to be fixed (we are interested in the case $x=0$). Now assume that $f = f(\theta, \phi, \psi)$ is separable, that is $f = Y(\theta,\phi) e^{i n \psi}$ where $n \in \mathbb{Z}$ (recall $\psi$ has period $2\pi$). Substituting this into the equation $-\Delta_\Sigma f =  \lambda f$ is equivalent to the following equation for $Y(\theta,\phi)$:
\begin{equation}
    -D^2 Y = \left[ 4 S B(0) \lambda - \frac{n^2 B(0)}{\alpha^2 A(0)}\right] Y
\end{equation} Then we can take as eigenfunctions the charged scalar harmonics $Y(\theta, \phi)$ introduced above with the shifted eigenvalues
\begin{equation}
    \lambda = \frac{\mu}{4S B(0)} + \frac{n^2}{4 S \alpha^2 A(0)}
\end{equation} where $\mu$ is given by \eqref{mu}, which explicitly is given by
\begin{equation}
    \mu_{n,k} = (4k + 2)|n| + 4k(k +1), \qquad n \in \mathbb{Z},\; k \in \mathbb{N}.
\end{equation} Note that $\mu_{1,0} = 2, \mu_{2,0} = 4, \mu_{3,0} = 6, \mu_{4,0} = \mu_{0,1} = 8$ are the lowest values of $\mu$. The corresponding lowest non-zero eigenvalues are
\begin{equation}
    \lambda_{1,0} =4.61536, \quad \lambda_{2,0} = 15.2467, \quad \lambda_{0,1} = 6.42946
\end{equation} The eigenvalues $\lambda^L_{n,k}$ of the stability operator \eqref{Lpage} are then given by 
\begin{equation}
    \lambda^L_{n,k} = \lambda_{n,k} - 3
\end{equation} and are all positive apart from a single negative eigenvalue $\lambda^L_{0,0} = -3$ corresponding to $\mu_{0,0} =0$. It follows that $\text{index}(\Sigma) = 1$. This completed the proof of Theorem \ref{spectrum:Page}.

\section{Minimal hypersurfaces in the Sasaki-Einstein spaces $Y^{p,q}$}
\subsection{$Y^{p,q}$ Sasaki-Einstein metrics}
An interesting class of five-dimensional cohomogeneity-one Einstein manifolds which admit homogeneous minimal (but not totally geodesic) hypersurfaces are the countably infinite family of Sasaki-Einstein manifolds $Y^{p,q} \cong S^2 \times S^3$ explicitly constructed in \cite{Gauntlett:2004yd}. Recall that a Sasaki-Einstein manifold is an Einstein manifold whose metric cone is Ricci-flat and K\"ahler.  The metric can be expressed in local coordinates as 
\begin{equation}\label{SE}
\begin{aligned}
    g &= \frac{1-y}{6} (\td \theta^2 + \sin^2\theta \td \phi^2) + \frac{\td y^2}{w(y) q(y)} + \frac{q(y)}{9} (\td \psi - \cos\theta \td \phi)^2 \\
    & + w(y) \left[ \td \alpha + f(y)(\td \psi - \cos\theta \td \phi)\right]^2
\end{aligned}
\end{equation} with
\begin{equation}
    w(y) = \frac{2(b-y^2)}{1-y}, \quad q(y) = \frac{b - 3 y^2 + 2 y^3}{b-y^2}, \quad f(y) = \frac{b - 2y + y^2}{6(b-y^2)}
\end{equation} and $b$ is a constant. This is an Einstein metric satisfying
\begin{equation}
    R_{ab} = 4 g_{ab}.
\end{equation} An extensive global analysis of this family of metrics is given in \cite{Gauntlett:2004yd}. Let $p,q$ be coprime positive integers with $ p > q$. Provided that 
\begin{equation}
    b = \frac{1}{2} - \frac{p^2 - 3q^2}{4p^3} \sqrt{4p^2 - 3 q^2}.
\end{equation} and 
\begin{equation}
    \ell = \frac{q}{3 q^2 - 2 p^2 + p \sqrt{4p^2 - 3 q^2}},
\end{equation} the local metrics \eqref{SE} globally extend to a countably infinite family of  smooth metrics on $S^2 \times S^3$ characterized by $(p,q)$, provided that we take the ranges of the coordinates to be
\begin{equation}
    y_1 \leq y \leq y_2, \quad 0 \leq \theta \leq \pi, 
\end{equation} where the endpoints $y_1, y_2$ are the two smallest roots of the cubic appearing in the numerator of $q(y)$, that is, $b - 3 y^2 + 2y^3$:
\begin{equation}
    y_{1,2} = \frac{1}{4p} \left( 2p \pm 3q - \sqrt{4p^2 - 3 q^2} \right).
\end{equation} Note that setting $\epsilon = q/p \in \mathbb{Q}$ we can express
\begin{equation}\label{bpq}
    b = \frac{1}{2} - \frac{1 - 3 \epsilon^2}{4} \sqrt{4 - 3\epsilon^2}
\end{equation} and it is elementary to see that $b \in (0,1)$ since $\epsilon \in (0,1)$. Furthermore, $b - y_i^2 > 0$ so that the quantity $b - y^2 > 0$ everywhere for $y \in (y_1,y_2)$. Similar reasoning shows that 
$y_1 < 0 < y_2$ so that $y=0$ lies in the allowed domain of $y$, and $y_1 \in (-1/2,0)$ and $y_2 \in (0,1)$. Finally, the coordinates $(\phi,\psi,\alpha)$ are subject to the identifications 
\begin{equation}\label{ident}
(\psi,\phi,\alpha) \sim (\psi + 2\pi, \phi, \alpha), \quad (\psi,\phi,\alpha) \sim (\psi + 2\pi, \phi + 2\pi, \alpha), \quad  (\psi,\phi,\alpha) \sim (\psi, \phi, \alpha + 2\pi \ell).
\end{equation} Note that the coordinate $\phi$ is not independently $2\pi-$periodic, but must be accompanied by a half-rotation along the $\psi$ direction. The metric is cohomogeneity-one with isometry group $SU(2)_{\times \mathbb{Z}_2} \times U(1) \times U(1)$ \cite{Gauntlett:2004yd}. In particular, the surfaces of homogeneity are the level sets $\Sigma_y$ defined by $y= y_0 \in (y_1, y_2)$, which have induced a homogeneous metric
\begin{equation}
     h = \frac{1-y}{6} (\td \theta^2 + \sin^2\theta \td \phi^2) + \frac{q(y)}{9} (\td \psi - \cos\theta \td \phi)^2 + w(y) \left[ \td \alpha + f(y)(\td \psi - \cos\theta \td \phi)\right]^2.
\end{equation} For convenience, we will simply refer to these level sets as $\Sigma$ hereafter. The Killing vector fields $\partial/ \partial \alpha, \partial / \partial \psi$ generate the $U(1) \times U(1)$ subgroup of the isometry group, whereas the $SU(2)$ acts with three-dimensional orbits on the $(\psi,\theta,\phi)$ with generators \eqref{SU(2)gen} with the replacement $\phi \to -\phi$). These  hypersurfaces have topology $L(2,1) \times U(1)$. One way to see this is to appeal to the classification of closed Riemannian manifolds $M$ of dimension $s$ admitting a smooth effective torus $\mathbb{T}^{s-1}$ action as isometries~\cite[Theorem 2]{Hollands}. In this particular setting, $s=4$ and the orbit space $M / \mathbb{T}^3$ (parameterized by the coordinate $\theta$) is diffeomorphic as a manifold to a closed interval.  Then the possible topologies are $S^2 \times \mathbb{T}^2, S^3 \times S^1$ or $L(p,q) \times S^1$. To verify the above claim on the topology of $\Sigma$, observe that the Killing fields 
\begin{equation}
    K_1 = \frac{\partial}{\partial \psi} + \frac{\partial}{\partial \phi}, \qquad K_2 = -\frac{\partial}{\partial \psi} + \frac{\partial}{\partial \phi} , \qquad K_3 = \ell \frac{\partial}{\partial \alpha} , \qquad K_4 = \frac{\partial}{\partial \psi}
\end{equation} each generate a $2\pi-$periodic flow ($K_1$ and $K_2$ degenerate smoothly at $\theta =0$ and $\theta = \pi$ respectively).  Observe that $K_1 = 2K_4 + K_2$, so we may choose $(K_4,K_2,K_3)$ as a basis for generators of the $\mathbb{T}^3$ action. The orbit space $M / \mathbb{T}^3$ is then simply the closed interval $[0,\pi]$. At the endpoints, a linear combination of the set $\{K_4,K_2,K_3\}$ with integer coefficients must vanish, namely $v_1 = (2,1,0)$ and $v_2 = (0,1,0)$. This implies that $\Sigma$ has topology $L(2,1) \times S^1$ \cite{Hollands}.
\subsection{Homogeneous minimal hypersurfaces} For a hypersurface defined as a level set of $y$, it is straightforward to compute the second fundamental form using the local formula \eqref{2FF} where now $N^i =0$ (since $\partial/\partial y$ is hypersurface orthogonal) and $\beta = \sqrt{g_{yy}}$:
\begin{equation}
    K_{ij} = \frac{1}{2\sqrt{g_{yy}}} \partial_y h_{ij}, \qquad g_{yy} = \frac{1-y}{2Q(y)}
\end{equation} where $i,j = \{\theta,\psi,\phi,\alpha\}$ and $Q(y):=b - 3y^2 + 2y^3$. 
\begin{prop} There is a single minimal hypersurface of the above form, corresponding to a level set $y = y_m$ where $y_m \in (y_1,0)$. 
\end{prop}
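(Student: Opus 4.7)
The strategy is to reduce the minimality condition on a level set $\Sigma_y$ to the vanishing of a single cubic polynomial in $y$ with coefficients depending only on $b$, and then use elementary calculus to locate its roots within the allowed interval $(y_1,y_2)$.

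First, observe that $\partial/\partial y$ is hypersurface-orthogonal for the metric (\ref{SE}), so in the decomposition (\ref{ADM}) the shift vanishes, $N^i=0$, and $\beta = \sqrt{g_{yy}} = \sqrt{(1-y)/(2Q(y))}$ with $Q(y) := b - 3y^2 + 2y^3$. By the same argument as in the proof of Proposition \ref{Minpage}, the mean curvature is $H_{\Sigma} = (2\beta)^{-1} \partial_y \log \det h$, and minimality reduces to the vanishing of a single function of $y$.

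Next, I would compute $\det h$ using an orthonormal coframe adapted to the fiber structure visible in $h$: namely $e^{1}=\sqrt{(1-y)/6}\,d\theta$, $e^{2}=\sqrt{(1-y)/6}\sin\theta\,d\phi$, $e^{3}=\tfrac{1}{3}\sqrt{q(y)}(d\psi-\cos\theta\,d\phi)$, and $e^{4}=\sqrt{w(y)}\bigl(d\alpha+f(y)(d\psi-\cos\theta\,d\phi)\bigr)$. The volume element then factorizes, and the key algebraic simplification $q(y)w(y) = 2Q(y)/(1-y)$ collapses the expression to
\begin{equation*}
\sqrt{\det h} \;=\; \tfrac{\sin\theta}{18}\sqrt{2(1-y)\,Q(y)}.
\end{equation*}
Taking the logarithmic derivative and combining over a common denominator yields $\partial_y \log \det h = -F(y)/[(1-y)Q(y)]$, where
\begin{equation*}
F(y) \;:=\; 8y^{3} - 15 y^{2} + 6 y + b.
\end{equation*}
Since $(1-y)$, $Q(y)$, and $\beta$ are all strictly positive on the interior of the orbit interval, $H_\Sigma=0$ if and only if $F(y)=0$.

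The final step is to show $F$ has a unique root in $(y_1,y_2)$ and that it lies in $(y_1,0)$. Computing $F'(y) = 6(4y-1)(y-1)$ shows $F$ is strictly increasing on $(-\infty,\tfrac{1}{4})$ and strictly decreasing on $(\tfrac{1}{4},1)$. Using the defining relation $Q(y_i)=0$, i.e.\ $b = 3y_i^2 - 2y_i^3$, one obtains the clean identity $F(y_i) = 6\,y_i(y_i-1)^{2}$. Since the excerpt established $y_1\in(-\tfrac{1}{2},0)$ and $y_2\in(0,1)$, this gives $F(y_1)<0$ and $F(y_2)>0$. Combined with $F(0)=b>0$ and $F(\tfrac{1}{4}) = \tfrac{11}{16}+b>0$, monotonicity on $(y_1,\tfrac{1}{4})$ produces exactly one root $y_m\in(y_1,0)$, while $F$ remains positive on $(\tfrac{1}{4},y_2)$ (if nonempty), precluding any further root. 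This establishes the proposition.

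The main obstacle is purely algebraic: recognizing the identity $q(y)w(y) = 2Q(y)/(1-y)$, which is what makes the expression for $\partial_y \log \det h$ collapse into the manageable cubic $F(y)$ rather than a rational function with higher-degree numerator. Once this simplification is in hand, the remainder is straightforward one-variable calculus.
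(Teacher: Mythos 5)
Your proposal is correct and follows the same overall strategy as the paper: reduce $H_\Sigma=0$ to the vanishing of the cubic $F(y)=8y^3-15y^2+6y+b$ via the simplification $q(y)w(y)=2Q(y)/(1-y)$ in $\det h$, then locate the roots of that cubic. The only genuine difference is in the root-location step. The paper computes the discriminant of the cubic to establish three distinct real roots, uses Descartes' rule together with the critical points at $y=1/4$ and $y=1$ to find one negative and two positive roots, and then excludes the positive roots by observing that at any root of the cubic one has $Q(y)=-6y(1-y)^2$, which is negative for $y>0$, so those level sets lie outside the Riemannian domain. You instead evaluate $F$ at the endpoints using $F(y_i)=6y_i(y_i-1)^2$ when $Q(y_i)=0$ --- the same algebraic identity $Q(y)=F(y)-6y(1-y)^2$ read in the opposite direction --- and combine the resulting signs $F(y_1)<0<F(0), F(y_2)$ with the monotonicity of $F$ on $(-\infty,1/4)$ and $(1/4,1)$. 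Your version is slightly more economical, avoids the discriminant computation entirely, and has the advantage of directly exhibiting the root in $(y_1,0)$ by the intermediate value theorem, which is precisely what the proposition asserts; the paper's version yields the additional (unneeded) information that the two remaining real roots are positive and lie outside the orbit interval. All the component computations in your argument ($F'=6(4y-1)(y-1)$, $F(1/4)=\tfrac{11}{16}+b>0$, the factorization of $qw$) check out.
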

\begin{proof} The determinant of the induced metric $h_{ij}$ is given by
\begin{equation}
\det h = \frac{(1-y)^2}{324} q(y) w(y) \sin^2\theta.
\end{equation} The mean curvature of the hypersuface is then 
\begin{equation}
    H_\Sigma = \text{Tr}_h K = h^{ij}K_{ij} = \frac{1}{2\sqrt{g_{yy}}} \partial_y (\log \det h).
\end{equation} Straightforward computation shows that the condition $H_\Sigma=0$ is equivalent to solving the cubic
\begin{equation}
    H(y):=b + 6 y - 15 y^2 + 8 y^3 =0
\end{equation}  The discriminant of $H(y)$ is 
\begin{equation}
    -108 \left(16 b^2-5b - 11\right) = 108 \cdot  \left(16 b + 11\right)(1-b)
\end{equation} In the range $0 < b < 1$, this quantity is strictly positive. Thus, there are three distinct real roots $y^*_i$, $i=1,2,3$. It remains to determine which of these roots lie in the range $(y_1, y_2)$ and thus constitute a minimal surface. From Descartes' rule of signs, $H(y)$ can have two or zero positive roots. It is easy to check that the cubic $H(y)$ has a local maximum at $y = 1/4$ and a local minimum at $y = 1$. In addition $H(0) >0$, $H(1/4) > 0$, and $H(1) = b-1 < 0$. It follows that $H(y)$ must have positive roots $y_2^*, y_3^*$ at some $1/4 < y_2^* < 1$, $y_3^* > 1$ and a single negative root $y_1^*$. Clearly $y_3^*$ cannot lie inside $(y_1,y_2)$. We can also see that the cubic $Q(y)$ has a local maximum at $y = 0$ and a local minimum at $y = 1$. Thus (as expected) it will be positive between one negative root $y_1 <0$ and one positive root $y_2 \in (0,1)$, with a third positive root $y_3>1$. 

At any root $y_i^*$ of $H(y)$,  $Q(y) = b - 3y^2 + 2y^3 = -6 y_i^{*}(1 - y_i^{*})^2$. Observe that for $y_1^* < 0$, this cubic is positive; but for $y^*_2 > 0$ it will be negative. In particular, $g_{yy} < 0$ and the induced metric $h$ is not positive definite (since $q(y^*_2) < 0$). It follows that only the negative root $\bar{y}:=y_1^* \in (y_1,y_2)$ of $H(y)$ gives rise to a minimal surface. 
\end{proof}

This minimal surface is not totally geodesic. Let $\tilde{K}_{ij}:=\sqrt{g_{yy}} K_{ij}$. Direct computation shows 
\begin{equation}
    \begin{aligned}
        \tilde{K}_{\alpha\alpha} &= - 8\bar{y}, \qquad \tilde{K}_{\alpha\psi} = -\frac{1 + 4\bar{y}}{4}, \qquad \tilde{K}_{\alpha\phi} = \tilde{K}_{\alpha\psi} \cos\theta \\
        \tilde{K}_{\theta\theta} &= -\frac{1}{12}, \qquad \tilde{K}_{\psi\psi} = -\frac{1 + 2\bar{y}}{9}, \qquad \tilde{K}_{\psi\phi} = \tilde{K}_{\psi\psi} \cos\theta, \\ \tilde{K}_{\phi\phi} &= -\frac{1}{72} \left(7 + \cos(2\theta) + 8\bar{y} (1 + \cos2\theta)\right)
    \end{aligned}
\end{equation} where we have explicitly eliminated $b$ using $H(\bar{y}) =0$. The inverse metric components can be read off from 
\begin{equation}
    h^{-1} = h^{ij}\partial_i \partial_j = \frac{1}{w(y)} (\partial_\alpha)^2 + \frac{9}{q(y)} (\partial_\psi - f\partial_\alpha)^2 + \frac{6}{1-y} (\partial_\theta^2 + \csc^2\theta \partial_\phi^2).
\end{equation} We then compute
\begin{equation}
    \text{Tr}(K^2) = K^{ij}K_{ij} = \frac{2(1 - 10\bar{y})}{1 - \bar{y}}.
\end{equation} It then follows that
\begin{equation}
    \text{Ric}(n,n) + K^{ij}K_{ij} = \frac{6(1 - 4\bar{y})}{1 - \bar{y}} < 0
\end{equation} using the Einstein condition $\text{Ric}(n,n) = 4$. The stability operator is thus given by
\begin{equation}
    L = -\Delta_\Sigma - \frac{6(1 - 4\bar{y})}{1 - \bar{y}}.
\end{equation} The set of eigenvalues $\tilde\lambda_i$ of $L$ is therefore given by the eigenvalues $\lambda_i$ of the Laplacian up to a constant shift. It is clear that there is always one negative eigenvalue of $L$ with a constant eigenfunction.  

\subsection{Proof of Theorem \ref{Thm2}} We can obtain the eigenvalues $\tilde\lambda_i$ by reducing the problem to an ordinary differential equation, which can be solved in terms of hypergeometric functions. The Laplacian is given by
\begin{equation}
    \begin{aligned}
        \Delta_\Sigma &= \frac{6}{1-\bar{y}} \left[ \frac{1}{\sin\theta} \partial_\theta (\sin\theta \partial_\theta) + \frac{1}{\sin^2\theta} \left(\partial_\phi + \cos\theta \partial_\psi\right)^2 \right] + \frac{\partial_\alpha^2}{w(\bar{y})} + \frac{9 f^2}{q(\bar{y})} \left(\partial_\alpha - \frac{\partial_\psi}{f(\bar{y})}\right)^2.
    \end{aligned}
\end{equation}  Due to the peridoicites \eqref{ident}, we consider solutions of $\Delta u = -\lambda u$ which take the separable form
\begin{equation}
    u = U(\theta) e^{ i N_\psi \psi} e^{i N_\phi \phi} e^{i N_\alpha \alpha / \ell}
\end{equation} where $N_\psi, N_\phi, N_\alpha \in \mathbb{Z}$. Substituting this into the eigenvalue equation gives
\begin{equation}
\begin{aligned}
    & \frac{6}{1-\bar{y}} \left[ \frac{1}{U \sin\theta} \partial_\theta ( \sin\theta \partial_\theta U) - \frac{1}{\sin^2\theta} (N_\phi + \cos\theta N_\psi)^2 \right] \\ & - \frac{N_\alpha^2}{\ell^2} - \frac{9f(\bar{y})}{w(\bar{y})}\left(\frac{N_\alpha}{\ell} - \frac{N_\psi}{f(\bar{y})}\right)^2 = -\lambda
    \end{aligned}
\end{equation} The first term above in the square brackets must itself be a constant in order for this equation to hold (since every other term is a constant). This implies
\begin{equation}\label{eigen:Lap}
    \lambda = \frac{N_\alpha^2}{\ell^2} + \frac{9f(\bar{y})}{w(\bar{y})}\left(\frac{N_\alpha}{\ell} - \frac{N_\psi}{f(\bar{y})}\right)^2 + \frac{6(E - N_\psi^2)}{1-\bar{y}}
\end{equation} where $E$ is a constant defined by the eigenvalue equation
\begin{equation}\label{auxeigen}
    \left[ -\frac{1}{U \sin\theta} \partial_\theta ( \sin\theta \partial_\theta U) + \frac{1}{\sin^2\theta} (N_\phi + \cos\theta N_\psi)^2 + N_\psi^2\right] = E
\end{equation} This auxiliary eigenvalue problem can be solved using the approach of \cite{Gibbons:2004em}. Define $\vartheta = \theta/2$ with $\vartheta \in (0,\pi/2)$ since $\theta \in (0,\pi)$. Then \eqref{auxeigen} becomes
\begin{equation}
-\frac{1}{4 \sin\vartheta \cos\vartheta} \partial_\vartheta( \sin\vartheta \cos\vartheta \partial_\vartheta U) + \frac{1}{4 \sin^2\vartheta\cos^2\vartheta} \left[ N_\phi + \cos2\vartheta N_\psi\right]^2 U + N_\psi^2 U = E U
\end{equation} Next define integers $n_\psi, n_\phi$ by
\begin{equation}
    n_\psi = N_\psi - N_\phi, \qquad n_\phi = N_\psi + N_\phi.
\end{equation} Then we get
\begin{equation}
-\frac{1}{ \sin\vartheta \cos\vartheta} \partial_\vartheta( \sin\vartheta \cos\vartheta \partial_\vartheta U) + \left(\frac{(n_\psi)^2}{\cos^2\vartheta} + \frac{(n_\phi)^2}{\sin^2\vartheta} \right) U= 4E U
\end{equation} This is the same as \cite[Eq.(36)]{Gibbons:2004em}, with the identifications $A = n_\phi$, $B = n_\psi$, where it is shown to possess three regular singular points. The two linearly independent solutions can be written in terms of hypergeometrics:
\begin{equation} \nonumber
    u_\pm = \left( \cos^{\pm B} \vartheta \sin^A\vartheta\right) \tensor[_2]{F}{_1} \left( \frac{1 + A \pm B - (1 + 4E)^{1/2}}{2}, \frac{1 + A \pm B + (1 + 4E)^{1/2}}{2}, 1 \pm B; \cos^2\vartheta \right).
\end{equation} Regularity as $\vartheta \to \pi/2$ requires that $\pm B \geq 0$ for $u_\pm$ while regularity as $\vartheta \to 0$ for both $u_\pm$ implies $A \geq 0$ \cite{Gibbons:2004em}. Furthermore, the hypergeometrics have a singular point when the argument $\cos^2\vartheta \to 1$ and regularity requires that they truncate to polynomials. This fixes the conditions
\begin{equation}\label{reg:k}
\frac{1 + |A| + |B| \pm (1 + 4E)^{1/2}}{2} = -k, \qquad k = \mathbb{N} \cup \{0 \}.
\end{equation} Set $E = J(J+1)$. Then \eqref{reg:k} can be solved by setting
\begin{equation}
    J = k + \frac{1}{2}( |n_\psi| + |n_\phi|).
\end{equation} Note that $J \geq 0$ although it can take half-integer values. 
Also note that 
\begin{equation}
    E - N_\psi^2 = k(k+1) + \left( k + \frac{1}{2}\right)(|n_\psi| + |n_\phi|) + \frac{|n_\psi| |n_\phi| - n_\psi n_\phi}{2} \geq 0
\end{equation} since each term on the right-hand side is non-negative. It then follows from \eqref{eigen:Lap} that the set of eigenvalues $\lambda = \lambda_\gamma$, where the label $\gamma =\{k,N_\alpha,N_\psi,N_\phi\}$, are non-negative  and vanish if and only if $k = N_\alpha = N_\psi = N_\phi=0$. The eigenvalues of the stability operator are then
\begin{equation}\label{eigen:L}
    \tilde{\lambda}_\gamma = \frac{N_\alpha^2}{\ell^2} + \frac{9f(\bar{y})}{w(\bar{y})}\left(\frac{N_\alpha}{\ell} - \frac{N_\psi}{f(\bar{y})}\right)^2 + \frac{6(E - N_\psi^2 - 1 + 4\bar{y})}{1-\bar{y}} .
\end{equation} Clearly, when $\gamma = (0,0,0,0)$ $\tilde\lambda_\gamma < 0$. Our goal is to determine whether there are any other negative eigenvalues. 

We have shown that $-1 < 
\bar{y} < 0$. By expressing $b$ as a function of $\epsilon = q/p \in (0,1)$ using \eqref{bpq}, we find that $\bar{y}$ is the single negative root of the cubic
\begin{equation}
    8 y^3 - 15 y^2 + 6y + \frac{1}{2} - \frac{1 - 3\epsilon^2}{4} \sqrt{4 - 3 \epsilon^2} =0
\end{equation} Note that when $\epsilon=1$ this cubic can be factorized explicitly
\begin{equation}
    (1 + 8 y)(1- y)^2 =0
\end{equation} 
\begin{figure}[h]
    \centering
    \includegraphics[width=0.5\linewidth]{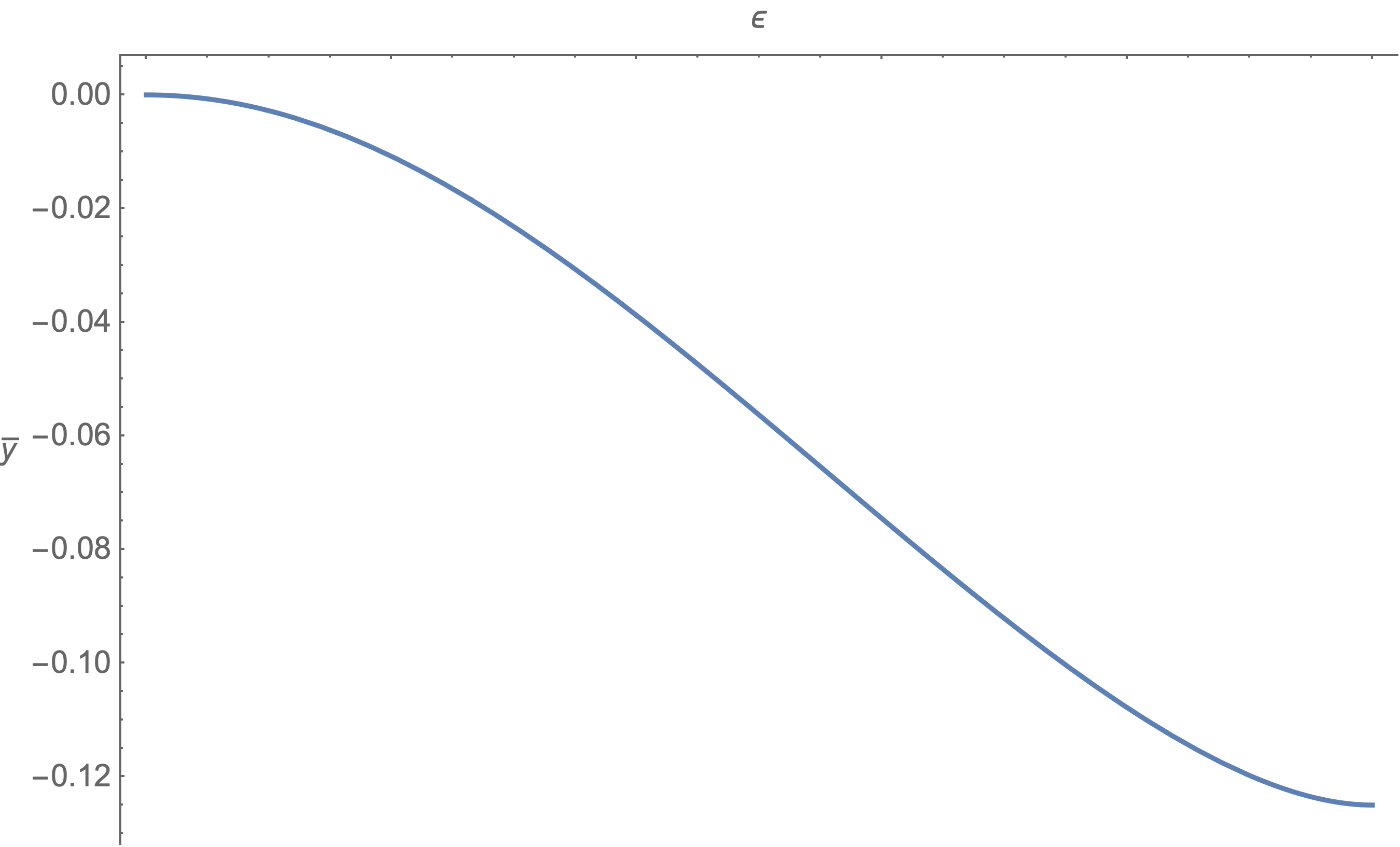}
    \caption{$\bar{y}$ as a function of $\epsilon = q/p \in (0,1)$. As $\epsilon \to 1$, $\bar{y} \to -1/8$.} 
    \label{negeigenvalue}
\end{figure} so that $\bar{y} = -1/8$. Using the explicit formulas for the roots of a cubic, one can argue that $\bar{y}$ is a monotonically decreasing function of $\epsilon \in (0,1)$ (this can be easily seen by plotting $\bar{y}(\epsilon)$ as shown in Figure \ref{negeigenvalue}). Hence $\bar{y} > -1/8$. Then provided $k \geq 1$, 
\begin{equation}
    E - N_\psi^2 - 1 + 4\bar{y} >k(k+1) + \left( k + \frac{1}{2}\right)(|n_\psi| + |n_\phi|) + \frac{|n_\psi| |n_\phi| - n_\psi n_\phi}{2}  -\frac{3}{2} \geq \frac{1}{2}
\end{equation} and hence $\tilde\lambda_\gamma >0$. It remains to consider the $k=0$ case. Suppose $k= N_\alpha =0$ and $(N_\psi,N_\phi) \neq (0,0)$. Then 
\begin{equation}
     E - N_\psi^2 - 1 + 4\bar{y} > \frac{1}{2}\left(|n_\psi| + |n_\phi|) + |n_\psi| |n_\phi| - n_\psi n_\phi \right)  - \frac{3}{2}
\end{equation} It is easy to check that this quantity (and therefore $\tilde\lambda_\gamma$) will be positive unless $(N_\psi,N_\phi) \in \mathcal{S} = \{ (\pm 1,0), (0,\pm 1), (\pm 1,\pm 1)\}$. Moreoever, including $N_\alpha \neq 0$ will only contribute positive terms to $\tilde\lambda_\gamma$, so we have positivity for any $\gamma = (0,N_\alpha, N_\psi,N_\phi)$ provided $(N_\psi, N_\phi) \notin \mathcal{S}$. For each of the exceptional cases in $\mathcal{S}$,
\begin{equation}
     \frac{1}{2}\left(|n_\psi| + |n_\phi|) + |n_\psi| |n_\phi| - n_\psi n_\phi \right) = 1.
\end{equation} 
Consider first the case $(N_\psi,N_\phi) = (0,\pm 1)$. Then since every other term in $\tilde\lambda_{(0,0,0,\pm 1)}$ vanishes, we will have a a negative eigenvalues of algebraic multiplicity 2 for all possible $\epsilon = q/p \in (0,1)$ since $E - N_\psi^2 - 1 + 4\bar{y}= 4\bar{y} < 0$. In particular, 
\begin{equation}
   -\frac{8}{3} <  \tilde\lambda_{(0,0,0,\pm 1)} = \frac{24 \bar{y}}{1 - \bar{y}} < 0.
\end{equation}
Next, consider the case $\gamma = (0,N_\alpha,0,0)$ where $N_\alpha \in \mathbb{Z}$. We can argue that $\tilde\lambda_\gamma > 0$ as follows. Since 
\begin{equation}\label{Nalpha}
       \tilde{\lambda}_{(0,N_\alpha,0,0)} = \frac{N_\alpha^2}{\ell^2} + \frac{9 N_\alpha^2}{w(\bar{y})f(\bar{y})} - \frac{6(1 - 4\bar{y})}{1-\bar{y}} .
\end{equation} It is straightforward to show that 
\begin{equation}
   -8< - \frac{6(1 - 4\bar{y})}{1-\bar{y}} < -6
\end{equation} for $\bar{y} \in (-1/8,0)$. Using the explicit expression for $\ell$, we also have
\begin{equation}
    \frac{N_\alpha^2}{\ell^2} = \frac{q^2N_\alpha^2 (\sqrt{4 - 3\epsilon^2} + 3\epsilon^2 -2)^2}{\epsilon^4}.
\end{equation} Elementary analysis shows that for $\epsilon\in(0,1)$ this term lies in the range
\begin{equation}
4 q^2 N_\alpha^2 <  \frac{N_\alpha^2}{\ell^2} < \left(5 + \frac{1}{16}\right) q^2 N_\alpha^2
\end{equation} and recall $q \in \mathbb{N} = 1,2,3,\ldots$. Thus, if either $q \geq 2$ or $|N_\alpha| \geq 2$ the sum of the first and last terms of \eqref{Nalpha} is greater than zero and $\tilde\lambda_{(0,N_\alpha,0,0)} >0$. The remaining case is $N_\alpha =1, q = 1$. Using the explicit formula for $\bar{y}$, one can express \eqref{Nalpha} as a complicated function of $\epsilon$ (note this requires evaluating $q(\bar{y}), f(\bar{y})$). A plot of this function for $0 < \epsilon < 1$ shows that it is strictly positive (and in fact diverges as $\epsilon \to 0$). It follows $\tilde\lambda_\gamma >0$ for any $\gamma = (0,N_\alpha,0,0)$ and moreover for any $\gamma = (0,N_\alpha,0,N_\phi)$, since including $N_\phi \neq 0$ to the above analysis can only increase the eigenvalue. 

It remains to check the cases $\gamma = (0,0,\pm 1,0)$, $(0,0, \pm 1,\pm 1)$ for any $q$, and  $\gamma = (0,N_\alpha,\pm 1,0)$, $(0,N_\alpha,\pm 1,\pm 1)$ for $N_\alpha = \pm 1$ and $q=1$. Once again, these cases are too difficult to evaluate explicitly as the argument depends on the explicit expression for $\bar{y}(\epsilon)$. We have verified numerically that all these cases are strictly positive for all allowed $(q,p)$. 

To summarize, the stability operator $L$ for all $(p,q)$ admits only three negative eigenvalues $\tilde\lambda_{(0,0,0,0)} < \tilde\lambda_{(0,0,0,\pm 1)} < 0$ and the remainder are positive. This demonstrates that the minimal hypersurface located at $y = \bar{y}$ has index 3. This establishes Theorem \ref{Thm2}.

\subsection*{Acknowledgements} The authors would like to thank Marcus Khuri for discussions and pointing out useful references.

\end{document}